\theoremstyle{plain}
\newtheorem{thm}{Theorem}%[section]
\newtheorem{lem}[thm]{Lemma}
\newtheorem{prop}[thm]{Proposition}
\theoremstyle{definition}
\theoremstyle{remark}
\numberwithin{equation}{section}
\renewcommand{\textbf}[1]{\begingroup\bfseries\mathversion{bold}#1\endgroup} %permet de mettre les maths en gras
\newcommand{\grad}{\mathrm{grad}}
\newcommand{\dive}{\text{div}}
\newcommand{\vol}{\text{Vol}}
\newcommand{\B}{\mathbb{B}}
\newcommand{\N}{\mathbb{N}}
\newcommand{\R}{\mathbb{R}}
\renewcommand{\S}{\mathbb{S}}
\newcommand{\Sii}{\tilde{\Sigma}}
\title{Counter-example to a Kröger type spectral inequality}
\author{Luc Pétiard}
\date{}
\begin{document}

%\tableofcontents

\address{Université de Neuchâtel, Institut de Mathématiques, Neuchâtel, Switzerland}
\email{luc.petiard@unine.ch; luc.petiard@laposte.net}
\subjclass[2010]{58J50, 35P15}
\keywords{Laplacian, Hypersurfaces, Weyl's law, Isoperimetric ratio\\
SNF Proposal  200021 163228  Geometric Spectral Theory}

\begin{abstract}

Given a Riemannian manifold, Weyl's law indicates how the spectrum of the Laplacian behaves asymptotically. Because of that result, there has been a growing interest in finding geometrical bounds compatible with this law. In the case of hypersurfaces, the isoperimetric ratio is a natural geometrical quantity, that allows to bound the spectrum from above. We investigate the problem and find an example of hypersurface where the eigenvalues are minorated by the isoperimetric ratio.

\end{abstract}

\maketitle

\section{Introduction}
\addcontentsline{toc}{chapter}{Introduction}

Throughout this article we will consider smooth and compact hypersurfaces of $\R^{n+1}$, namely submanifolds of dimension $n$ in $\R^{n+1}$ equipped with induced metric.
The associated spectrum of the Laplace operator $\Delta = -\dive(\grad)$, is discrete, positive, and denoted
\[
0 = \lambda_0 < \lambda_1 \leqslant \lambda_2 \leqslant \dots \nearrow +\infty
\]

 One has very few examples of manifolds $M$ whose spectra are known, and an accurate estimation of the spectrum is difficult, even for the first non-zero eigenvalue. However we can still recover some information such as the volume and the dimension of the hypersurface when $k$ goes to infinity, according to Weyl's law (see \cite{panoramic_view}, p. 108). It states that there exists a constant $W(n)=\frac{(2\pi)^2}{\omega_n^{\frac{2}{n}}}$ such that for all fixed compact hypersurfaces $\Sigma$ of $\R^{n+1}$ one has:
\[
\lambda_k(\Sigma) \underset{k \to +\infty}{\sim}
W(n)\left(\dfrac{k}{\vol(\Sigma)}\right)^{\frac{2}{n}}
\]
where $\omega_n$ is the volume of the unit ball in $\R^n$ and $\vol(\Sigma)$ is the volume of $\Sigma$. This asymptotic law is also true for the eigenvalues $\mu_k(\Omega)$ of the laplacian on a bounded domain $\Omega$ of $\R^{n}$ with Lipschitz boundary, and Dirichlet (respectively Neumann) condition on it. Furthermore, the Pólya conjecture states that $\mu_k(\Omega)$ is, for all $k \in \N$, bounded from above (respectively from below) by $W(n)\left(\frac{k}{\vol(\Omega)}\right)^{\frac{2}{n}}$. Kröger \cite{kroger_polya} has proved that the Pólya conjecture is true for Neumann conditions, up to a coefficient $\left(\frac{n+2}{n}\right)^{\frac{2}{n}}$.

However, B. Colbois, E. Dryden et A. El Soufi showed that a result of Kröger type is not possible in the context of compact hypersurfaces. More precisely they showed in \cite{colbois_bounding_eigenvalues_operator} that if $\Sigma$ is an hypersurface of $\R^{n+1}$ with $n\geqslant 3$,

\begin{equation}
\label{sup_lambda_1=infini}
\underset{X}{\sup}\ \lambda_1 (X(\Sigma)) \vol(X(\Sigma))^{\frac{2}{n}} = \infty
\end{equation}

where the supremum is taken over the set of embeddings from $\Sigma$ to $\R^{n+1}$.

In dimension $2$ we have the result:
\[
\underset{\Sigma}{\sup}\ \lambda_1 (\Sigma) \vol(\Sigma)^{\frac{2}{n}} = \infty
\]
where the supremum is taken over the set of compact surfaces $\Sigma \subset \R^3$.

\medskip
Therefore we have to impose geometric restrictions in order to bound the spectrum from above. For example in the same paper the authors proved that if $\Sigma$ is a convex hypersurface of dimension $n$, then 
\[
\lambda_k(\Sigma)\vol(\Sigma)^{\frac{2}{n}}
\leqslant c(n)k^{\frac{2}{n}}
\]
where $c(n)$ is a constant depending only on the dimension.

For compact surfaces $\Sigma$ of $\R^3$, a natural restriction is the genus. A. Hassannezhad \cite{asma_conformal_upper_bounds_laplace_steklov_problem} showed the existence of two constants $C_1$ and $C_2$ such that for all compact surfaces $\Sigma$ of $\R^3$ we have:
\[
\lambda_k(\Sigma)\vol(\Sigma) \leqslant C_1\text{genus}(\Sigma)+C_2k.
\]
In particular for all $k \geqslant \text{genus}(\Sigma)$, we get
\[
\lambda_k(\Sigma)\vol(\Sigma) \leqslant C_3k
\]
where $C_3 = C_1+C_2$ is a universal constant.
In other words, separating the geometric term from the asymptotic one gives an inequality ''à la Kröger'', but only for eigenvalues $\lambda_k(\Sigma)$ such that $k\geqslant \text{genus}(\Sigma)$. This was motivated by an older result of N. Korevaar \cite{korevaar_upper_bounds_eigenvalues_conformal_metrics} who had shown the existence of a constant $C$ such that
\[
\lambda_k(\Sigma)\vol(\Sigma) \leqslant C(\text{genus}(\Sigma)+1)k.
\]
Hassannezhad also improved another result of Korevaar on conformal class in dimension $n \geqslant 3$.

Let us now focus on the main result of the article \cite{colbois_isoperimetric_control_spectrum_hypersurface} from B. Colbois, A. El Soufi and A. Girouard in which they link the eigenvalues of a compact hypersurface to the isoperimetric ratio:

\begin{thm}
\label{thmcolbois}

There exists an explicit constant $\gamma(n)$ such that, for all bounded domains $\Omega$ of $\R^{n+1}$ with a $\mathscr{C}^2$-boundary $\Sigma = \partial\Omega$, and for all $k\geqslant 1$,
\[
\lambda_k(\Sigma)\cdot\vol(\Sigma)^{\frac{2}{n}} \leqslant \gamma(n) I(\Sigma)^{1+\frac{2}{n}} k^{\frac{2}{n}}
\]

with $I(\Sigma)= \dfrac{\vol_n(\Sigma)}{\vol_{n+1}(\Omega)^{\frac{n}{n+1}}}$ the isoperimetric ratio and $\gamma(n) = \frac{2^{10n+18+8/n}}{n+1}\omega_{n+1}^{\frac{1}{n+1}}$.
\end{thm}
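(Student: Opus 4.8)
The plan is to use the variational (min--max) principle. It is enough to produce, for each $k\ge1$, functions $f_0,\dots,f_k$ on $\Sigma$ with pairwise disjoint supports: then $f_0,\dots,f_k$ span a $(k+1)$--dimensional space on which the Rayleigh quotient is $\le\max_i R(f_i)$ (cross terms vanish by disjointness of supports), so $\lambda_k(\Sigma)\le\max_{0\le i\le k}R(f_i)$ with $R(f)=\frac{\int_\Sigma|\nabla_\Sigma f|^2}{\int_\Sigma f^2}$. I would build each $f_i$ as the restriction to $\Sigma$ of a Lipschitz plateau function of $\R^{n+1}$ supported on one of $k+1$ balls or annuli whose fixed dilations are pairwise disjoint; since $|\nabla_\Sigma f_i|\le|\nabla f_i|$, the whole task is to choose this family so that every plateau function has a small Rayleigh quotient when restricted to $\Sigma$.

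The family would come from a Grigor'yan--Netrusov--Yau type decomposition. The Euclidean space $(\R^{n+1},|\cdot|)$ is metrically doubling with a constant $N(n)$ (every ball of radius $r$ is covered by $N(n)$ balls of radius $r/2$), so for any finite non--atomic Borel measure $\mu$ on $\R^{n+1}$ and any $m$, the decomposition lemma gives $m$ ``capacitors'' $(A_i,\widehat A_i)$ -- with $A_i$ a ball or an annulus $B(a_i,b_i)\setminus\overline{B(a_i,c_i)}$, $\widehat A_i$ a fixed dilation of it, the $\widehat A_i$ pairwise disjoint, and $\mu(A_i)\ge\frac{c(n)}{m}\mu(\R^{n+1})$ -- and on each a plateau function $u_i$ ($0\le u_i\le 1$, $u_i\equiv1$ on $A_i$, $\mathrm{supp}\,u_i\subset\widehat A_i$, $|\nabla u_i|\le \frac{C(n)}{\rho_i}$, $\rho_i$ the transition scale). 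I would run this with $m\asymp k$ and with a measure adapted to the geometry of the problem, involving both the $n$--area $E\mapsto\vol_n(\Sigma\cap E)$ and the ambient volume $E\mapsto\vol_{n+1}(\Omega\cap E)$, so as to arrange at once that each $A_i$ captures a fixed fraction $\gtrsim\vol_n(\Sigma)/k$ of the area of $\Sigma$ (making $\int_\Sigma f_i^2$ large, with $f_i=u_i|_\Sigma$) and a fixed fraction $\gtrsim\vol_{n+1}(\Omega)/k$ of the volume of $\Omega$; the latter forces $\rho_i\gtrsim\big(\vol_{n+1}(\Omega)/k\big)^{1/(n+1)}$ because $\vol_{n+1}(\Omega\cap B(a_i,b_i))\le\omega_{n+1}b_i^{n+1}$, keeping the plateau gradients small.

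Granting such a family, $f_0,\dots,f_k$ have disjoint supports, $\int_\Sigma f_i^2\ge\vol_n(\Sigma\cap A_i)\gtrsim\vol_n(\Sigma)/k$, and $\int_\Sigma|\nabla_\Sigma f_i|^2\le\frac{C(n)^2}{\rho_i^2}\vol_n(\Sigma\cap\widehat A_i)$, so
\[
\lambda_k(\Sigma)\ \le\ \max_i R(f_i)\ \lesssim_n\ \max_i\ \frac{k}{\vol_n(\Sigma)}\cdot\frac{\vol_n(\Sigma\cap\widehat A_i)}{\rho_i^2}.
\]
The last ingredient, where the isoperimetric ratio enters, is to bound $\vol_n(\Sigma\cap\widehat A_i)$. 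Since each $\widehat A_i$ genuinely crosses $\Sigma$ and carries a definite amount of both $\vol_n(\Sigma)$ and $\vol_{n+1}(\Omega)$, the relative isoperimetric inequality in the ball $B(a_i,2b_i)$, combined with the global inequality $\vol_{n+1}(\Omega)^{n/(n+1)}\le\frac{1}{(n+1)\omega_{n+1}^{1/(n+1)}}\vol_n(\Sigma)$ and the lower bound on $\rho_i$, should yield a bound of the form $\vol_n(\Sigma\cap\widehat A_i)\lesssim_n I(\Sigma)\,\vol_n(\Sigma)/k$, the factor $I(\Sigma)$ quantifying how far $\Sigma$ is from being round at that scale. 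Plugging this in, together with $\rho_i^{-2}\lesssim(k/\vol_{n+1}(\Omega))^{2/(n+1)}$ and $\vol_{n+1}(\Omega)=\big(\vol_n(\Sigma)/I(\Sigma)\big)^{(n+1)/n}$, and using $k^{2/(n+1)}\le k^{2/n}$, gives $\lambda_k(\Sigma)\vol_n(\Sigma)^{2/n}\le\gamma(n)I(\Sigma)^{1+2/n}k^{2/n}$; the explicit $\gamma(n)$ is obtained by tracking every constant through the recursive construction of the decomposition -- whose covering multiplicities are responsible for the factor $2^{10n+\cdots}$ -- and through this isoperimetric step.

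The step I expect to be hardest is the geometric estimate on $\vol_n(\Sigma\cap\widehat A_i)/\rho_i^2$. A hypersurface can fold an arbitrarily large $n$--area into an arbitrarily small ball, so $\vol_n(\Sigma\cap B(x,\rho))$ is \emph{not} controlled by $\rho^n$ and one cannot argue locally; one must exploit the global balance of the decomposition -- each capacitor sees a fixed share of both measures while the dilations stay disjoint -- and convert it, through the isoperimetric inequality relating the $n$--area of $\Sigma$ to the $(n+1)$--volume it encloses, into a quantitative estimate with exactly the exponent $1+\frac2n$. A related technical point is to choose the underlying measure so that the two requirements on the capacitors (enough $n$--area and enough $(n+1)$--volume in each) hold for \emph{all} $k+1$ pieces at once; these requirements pull the sizes of the pieces in opposite directions, and reconciling them is precisely what forces the isoperimetric ratio into the final constant.
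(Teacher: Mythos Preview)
The paper does not contain a proof of this statement. Theorem~\ref{thmcolbois} is quoted as ``the main result of the article \cite{colbois_isoperimetric_control_spectrum_hypersurface}'' by Colbois, El~Soufi and Girouard; it serves here only as motivation for the question the present paper actually addresses (whether the geometric factor $I(\Sigma)^{1+2/n}$ can be decoupled from the asymptotic factor $k^{2/n}$), and no argument for it is given or sketched. There is therefore nothing in this paper to compare your proposal against.

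For what it is worth, your outline is in the right spirit: the original proof in \cite{colbois_isoperimetric_control_spectrum_hypersurface} does proceed via the Grigor'yan--Netrusov--Yau disjoint-capacitor method with plateau test functions on $\R^{n+1}$ restricted to $\Sigma$, and the isoperimetric ratio enters precisely when one converts the $(n{+}1)$--volume captured by each capacitor into control on the relevant scale. Your own assessment of the delicate point is accurate: bounding $\vol_n(\Sigma\cap\widehat A_i)/\rho_i^2$ is where the work lies, and your heuristic invocation of a ``relative isoperimetric inequality in the ball'' is not by itself enough, since, as you note, $\vol_n(\Sigma\cap B)$ is not locally controlled. In the actual argument one does not try to bound $\vol_n(\Sigma\cap\widehat A_i)$ individually; one instead runs the decomposition with respect to the measure $\vol_{n+1}(\Omega\cap\cdot)$, uses the trivial bound $\vol_n(\Sigma\cap\widehat A_i)\le\vol_n(\Sigma)$ in the numerator, and lets the denominator $\int_\Sigma f_i^2\ge\vol_n(\Sigma\cap A_i)$ be handled via a separate estimate linking $\vol_{n+1}(\Omega\cap A_i)$ to $\vol_n(\Sigma\cap A_i)$ through the radius. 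If you want to reconstruct the proof, consult \cite{colbois_isoperimetric_control_spectrum_hypersurface} directly; it is not reproduced here.
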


\bigskip
Thus the question is whether there is a Kröger type inequality for large $k$ in dimension $n\geqslant 3$, that is  if there exist a constant $A>0$ and a continuous function $f:(0,+\infty)\mapsto \R$ such that for all hypersurfaces $\Sigma$ and for all $k$,
\begin{equation}
\label{inegseparee}
\lambda_k(\Sigma)\vol(\Sigma)^{\frac{2}{n}} \leqslant f(I(\Sigma)) + Ak^{\frac{2}{n}}.
\end{equation}

\medskip
We will show that this is not possible by constructing a family of counter-examples, that is:
\begin{thm}\ %
\label{theoreme_connexe}

Let $n\geqslant3$ and let $f:\left(0,+\infty \right)\to \R$ be a continuous function. For all $A > 0$ there exists $k_0\in \N$ such that for all $k\geqslant k_0$, there exists a connected hypersurface $\Sigma \subset \R^{n+1}$ verifying
\[
\lambda_k(\Sigma)\vol(\Sigma)^{\frac{2}{n}} > f(I(\Sigma)) + Ak^{\frac{2}{n}}.
\]
Moreover the hypersurface $\Sigma$ can be chosen diffeomorphic to a given hypersurface.
\end{thm}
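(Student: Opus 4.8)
The plan is to exhibit, for each large $k$, a connected hypersurface whose $k$-th eigenvalue is enormous while its isoperimetric ratio stays bounded (so that $f(I(\Sigma))$ cannot help the author of the desired inequality). The natural building block is a thin cylinder or a long thin tube: take a round sphere of $\R^{n+1}$ (or the prescribed model hypersurface) and attach to it $m$ very thin, very long necks, or alternatively replace part of it by a long thin cylindrical piece $\mathbb{S}^{n-1}(\varepsilon)\times[0,L]$ capped off smoothly. On such a cylinder the Laplacian decouples, and the low eigenvalues are governed by the $[0,L]$-factor: one gets eigenvalues of size roughly $(j\pi/L)^2$ plus contributions of size $\varepsilon^{-2}$ from the cross-sphere. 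By tuning $\varepsilon$ small (to make the cross-sectional modes expensive) and $L$ appropriately, one can force $\lambda_1$ — and hence $\lambda_k$ for the relevant range — to be as large as we like, while keeping $\vol(\Sigma)$ and $\vol(\Omega)$ under control so that $I(\Sigma)$ stays in a fixed compact subset of $(0,\infty)$; on that compact set $f$ is bounded, say by $F$. Then $\lambda_k(\Sigma)\vol(\Sigma)^{2/n}$ can be made to exceed $F + Ak^{2/n}$.

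More precisely, I would proceed in these steps. \emph{Step 1: a disconnected model.} Consider $N$ disjoint copies of a fixed small hypersurface $\Sigma_0$ (a rescaled sphere, or rescaled copies of the prescribed model), scaled so that the resulting $\Sigma$ has the eigenvalue $\lambda_1(\Sigma_0)$ with multiplicity $N$, hence $\lambda_{N-1}(\Sigma)=\lambda_1(\Sigma_0)$ is a fixed large number while $\lambda_k$ for $k<N$ is bounded below by $0$ only — so instead one scales the copies to be \emph{tiny}, making each $\lambda_1$(copy) huge (of order $r^{-2}$ for radius $r$), and arranges $N\approx k+1$ copies so that $\lambda_k(\Sigma)=\lambda_1(\text{tiny copy})\sim r^{-2}$. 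The total volume is $N\cdot r^n\vol(\Sigma_0)\sim k r^n$, which one keeps bounded by choosing $r\sim k^{-1/n}$; but then $\lambda_k\vol^{2/n}\sim r^{-2}(kr^n)^{2/n}\sim k^{2/n}$, only of the right order, not larger. To beat the constant $A$ one lets the copies be \emph{relatively} tiny: take $r=k^{-1/n}\eta$ with $\eta\to 0$; then $\lambda_k\vol(\Sigma)^{2/n}\sim \eta^{-2+2}$... this still balances. \emph{The correct mechanism} is that the volumes of the $n$-dimensional surface and the $(n+1)$-dimensional region scale differently: shrinking each copy by $r$ multiplies $\vol_n(\Sigma)$ by $r^n$ but $\vol_{n+1}(\Omega)$ by $r^{n+1}$, so $I(\Sigma)$ is \emph{scale-invariant} — good, it stays fixed. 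Taking $N$ copies multiplies $\vol_n$ by $N$ and $\vol_{n+1}$ by $N$, so $I$ scales like $N^{1/(n+1)}$, which grows; to keep $I$ fixed one must enlarge each copy as $N$ grows, r$\sim N^{-1/(n+1)}$. Then $\lambda_k\sim\lambda_N\sim r^{-2}\sim N^{2/(n+1)}$ and $\vol_n(\Sigma)^{2/n}\sim(Nr^n)^{2/n}=(N^{1/(n+1)})^{2/n}=N^{2/(n(n+1))}$, giving $\lambda_k\vol^{2/n}\sim N^{2/(n+1)+2/(n(n+1))}=N^{2/n}\sim k^{2/n}$ with an \emph{arbitrarily large constant} obtained by instead letting each $\lambda_1(\Sigma_0)$ be large via an auxiliary deformation of $\Sigma_0$ — e.g. collapsing $\Sigma_0$ onto a lower-dimensional set so $\lambda_1(\Sigma_0)\to\infty$ while $I(\Sigma_0)$ stays bounded, which is exactly the phenomenon \eqref{sup_lambda_1=infini}. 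So: fix one hypersurface $\Sigma_0$ with $\lambda_1(\Sigma_0)\vol(\Sigma_0)^{2/n}$ as large as desired (possible for $n\ge 3$), and bounded $I(\Sigma_0)$; take $N\approx k$ rescaled disjoint copies.

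\emph{Step 2: making it connected while preserving everything.} Join the $N$ copies by thin tubes of cross-section radius $\delta\ll r$ and bounded length. Standard gluing/surgery estimates (as in Colbois–El~Soufi and the references) show that as $\delta\to 0$ the spectrum of the connected sum converges to the union of the spectra of the pieces together with the Dirichlet spectrum of the tubes, so $\lambda_k(\Sigma_\delta)\to\lambda_k(\bigsqcup\text{copies})$; meanwhile the tubes add volume $O(N\delta^{n})$ to $\vol_n$ and $O(N\delta^{n+1})$ to $\vol_{n+1}$, negligible, so $I(\Sigma_\delta)$ stays near its target value. \emph{Step 3: diffeomorphism type.} Since in $\R^{n+1}$ with $n\ge 3$ connect-summing with a standardly embedded sphere does not change the diffeomorphism type, and one may instead take the $N$ copies to be copies of (small perturbations of) the prescribed model hypersurface joined appropriately, the final $\Sigma$ is diffeomorphic to the given hypersurface. \emph{Step 4: conclusion.} Bound $I(\Sigma)$ inside a fixed compact interval $J$, let $F=\max_J f$, and choose the parameters so that $\lambda_k(\Sigma)\vol(\Sigma)^{2/n}>F+Ak^{2/n}$; this fixes $k_0$.

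The main obstacle I expect is \emph{Step 2}: controlling the eigenvalues of the connected hypersurface and showing the thin connecting tubes neither lower the $k$-th eigenvalue (by creating cheap oscillating modes along a long thin tube — hence the tubes must be kept \emph{short}) nor spoil the isoperimetric ratio; this requires a careful quantitative gluing lemma rather than a soft limiting argument, since $N=N(k)$ and the geometry of the pieces both vary with $k$. The second delicate point is arranging, uniformly in $N$, a single model hypersurface $\Sigma_0\subset\R^{n+1}$ with simultaneously huge normalized $\lambda_1$ and controlled $I$, and verifying that taking many disjoint rescaled copies keeps $I$ in a compact set — which, as noted, forces the rescaling factor $r\sim N^{-1/(n+1)}$ and a corresponding bookkeeping of how $\lambda_k$, $\vol_n$, $\vol_{n+1}$ all scale.
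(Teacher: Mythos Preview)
Your approach has a genuine gap in the isoperimetric control, and it is not a detail. You correctly observe that $I(\Sigma)$ is scale-invariant, but two sentences later you claim that rescaling by $r\sim N^{-1/(n+1)}$ will ``keep $I$ fixed''; these two statements contradict each other. Taking $N$ disjoint copies of any $\Sigma_0$ gives $I = N^{1/(n+1)} I(\Sigma_0)$ regardless of scale, so with $N\approx k$ the isoperimetric ratio blows up like $k^{1/(n+1)}$, and for a suitably fast-growing $f$ the term $f(I(\Sigma))$ will swamp $Ak^{2/n}$. Worse, you cannot even arrange a single $\Sigma_0$ with simultaneously huge $\lambda_1(\Sigma_0)\vol(\Sigma_0)^{2/n}$ and bounded $I(\Sigma_0)$: Theorem~\ref{thmcolbois} with $k=1$ gives $\lambda_1\vol^{2/n}\leqslant\gamma(n)I^{1+2/n}$, so large normalized $\lambda_1$ \emph{forces} large $I$. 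The whole point of the theorem is that the isoperimetric ratio, even when it is the only constraint, cannot pin down the eigenvalues; your construction lets $I$ run off to infinity, which is exactly what must be avoided.

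The paper sidesteps this by taking, for each $k$, the disjoint union of a single unit sphere $\S^n$ with \emph{one} hypersurface $\Sigma_k$ satisfying $\lambda_1(\Sigma_k)>\lambda_k(\S^n)$ and having a \emph{fixed} volume $\vol(\Sigma_k)^{2/n}=4A\,\vol(\S^n)^{2/n}/W(n)$ independent of $k$ (possible by \eqref{sup_lambda_1=infini} plus rescaling). The first $k$ nonzero eigenvalues of $\tilde\Sigma_k=\Sigma_k\sqcup\S^n$ are then exactly $\lambda_1(\S^n),\dots,\lambda_{k-1}(\S^n)$, and Weyl's law on the sphere together with the inflated total volume gives $\lambda_k(\tilde\Sigma_k)\vol(\tilde\Sigma_k)^{2/n}>2Ak^{2/n}$. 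The isoperimetric trick is that the enclosed unit ball $\B^{n+1}$ furnishes a fixed lower bound on $\vol_{n+1}$ while $\vol_n(\tilde\Sigma_k)$ is fixed, so $I(\tilde\Sigma_k)$ lies in a compact interval independent of $k$ even though $I(\Sigma_k)$ itself may be enormous. Finally only \emph{one} thin tube is needed to connect the two pieces, so Ann\'e's spectral convergence applies directly for each fixed $k$, and connect-summing with a sphere leaves the diffeomorphism type of $\Sigma_k$ (hence of the prescribed model) unchanged.
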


\medskip

The idea of the construction is as follows and will be done in sections \ref{section_non_connected} and \ref{section_tubular_attachment}.
First we take a submanifold of $\R^{n+1}$ and modify it to get its first eigenvalue as large as desired. Then we consider the disjoint union of this hypersurface with the sphere $\S^n$, whose spectrum is well-known. This way we should be able to express the spectrum of the union with the one of the sphere.

We then ''glue'' them with a tube, thin enough not to alter the behaviour of the spectrum, and making the union diffeomorphic to the first submanifold.

In the end we obtain a connected hypersurface having an important property; the beginning of its spectrum is the one of the sphere, but its area is very large compared to the one of the sphere. We thus show that even with a controlled isoperimetric ratio, it is possible to find a hypersurface having a spectrum sufficiently big to contradict the inequality \ref{inegseparee}.
The first chapter covers the tools required for this construction.

\medskip

Let us remark that trying to find whether or not one can ''separate'' the geometric term from the asymptotic one, is not immediate. For example, let us call $i(\Sigma)$ the intersection index, that is, for a hypersurface, the maximal number of points you can get on a line intersecting $\Sigma$. Let us have a look at the following result\cite{colbois_bounding_eigenvalues_operator}
\[
\lambda_k(\Sigma)\vol(\Sigma)^{\frac{2}{n}}\leqslant B(n)i(\Sigma)^{1+\frac{2}{n}}k^{\frac{2}{n}}
\]
where $B(n)$ is an explicit constant depending only the dimension $n$.
Then it is still an open problem to know whether or not we can bound the spectrum in the following way :
\[
\lambda_k(\Sigma)\vol(\Sigma)^{\frac{2}{n}}
\leqslant B_1(n)F(i(\Sigma)) + B_2(n)k^{\frac{2}{n}}
\]
where $B_1(n)$ and $B_2(n)$ are two constants depending only on $n$ and $F$ is a continuous function.

\section{Non-connected case}
\label{section_non_connected}

In this first part we will prove the following result:
\begin{thm}\ %
\label{theoreme_cas_non_connexe}

Let $n\geqslant3$ and let $f$ be a continuous function, $f:\left(0,+\infty \right)\to \R$. For all $A > 0$ there exists $k_0\in \N$ such that for all $k\geqslant k_0$, there exists a smooth and compact non-connected hypersurface $\Sii_k \subset \R^{n+1}$ verifying
\[
\lambda_k(\Sii_k)\vol(\Sii_k)^{\frac{2}{n}} > f(I(\Sii_k)) + Ak^{\frac{2}{n}}.
\]
\end{thm}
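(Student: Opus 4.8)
The plan is to realise $\Sii_k$ as a disjoint union $\rho_k P_k\sqcup\S^n$, where $\S^n$ is the round unit sphere and $\rho_k P_k$ is a rescaled copy of a hypersurface $P_k$ with enormous first eigenvalue, the rescaling chosen so that $\vol(\rho_k P_k)$ is a \emph{fixed} multiple of $\vol(\S^n)$. Write $V_S=\vol(\S^n)$ and $I_S=I(\S^n)=V_S/\omega_{n+1}^{n/(n+1)}$. Given $A>0$ and the continuous function $f$, I would first fix an integer $t_0\geq 1$ with $(t_0+1)^{2/n}W(n)>2A$ (possible since $(t_0+1)^{2/n}\to+\infty$), and set $I_\infty=(t_0+1)I_S$ and $F_0=\sup_{[I_S,I_\infty]}f$, which is finite since $f$ is continuous on the compact interval $[I_S,I_\infty]\subset(0,+\infty)$. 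The numbers $t_0,I_\infty,F_0$ depend only on $A$, $f$ and $n$.

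For each $k$ I invoke the Colbois--Dryden--El Soufi phenomenon \eqref{sup_lambda_1=infini}, which is where the hypothesis $n\geq 3$ is used: since $\sup_X\lambda_1(X(\S^n))\vol(X(\S^n))^{2/n}=+\infty$, I may choose a smooth compact connected embedded hypersurface $P_k\subset\R^{n+1}$ with $\lambda_1(P_k)\vol(P_k)^{2/n}>(t_0 V_S)^{2/n}\lambda_{k-1}(\S^n)$. Rescaling $P_k$ by $\rho_k=(t_0 V_S/\vol(P_k))^{1/n}$ makes $\vol(\rho_k P_k)=t_0 V_S$, and since Laplace eigenvalues scale like the inverse square of lengths, $\lambda_1(\rho_k P_k)=\lambda_1(P_k)\vol(P_k)^{2/n}/(t_0 V_S)^{2/n}>\lambda_{k-1}(\S^n)$. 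I set $\Sii_k=\rho_k P_k\sqcup\S^n$, the two pieces placed in disjoint regions of $\R^{n+1}$; it is a smooth compact non-connected hypersurface. Because the Laplace spectrum of a disjoint union is the union of the spectra and every positive eigenvalue of $\rho_k P_k$ exceeds $\lambda_{k-1}(\S^n)$, the ordered eigenvalues $\lambda_0(\Sii_k),\dots,\lambda_k(\Sii_k)$ are $0,0,\lambda_1(\S^n),\dots,\lambda_{k-1}(\S^n)$; in particular $\lambda_k(\Sii_k)=\lambda_{k-1}(\S^n)$ for $k\geq 2$. Also $\vol(\Sii_k)=(t_0+1)V_S$, so
\[
\lambda_k(\Sii_k)\,\vol(\Sii_k)^{2/n}=(t_0+1)^{2/n}\,\lambda_{k-1}(\S^n)\,V_S^{2/n}.
\]

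The delicate point is keeping $I(\Sii_k)$ in a fixed compact set so that $f(I(\Sii_k))$ stays bounded. The domain bounded by $\Sii_k$ is the disjoint union of the domain bounded by $\rho_k P_k$ and a unit ball, so using $\vol(\rho_k P_k)=t_0 V_S$ and writing $I_{P_k}$ for the scale-invariant isoperimetric ratio of $P_k$, one computes
\[
I(\Sii_k)=\frac{(t_0+1)V_S}{\bigl((t_0 V_S/I_{P_k})^{(n+1)/n}+\omega_{n+1}\bigr)^{n/(n+1)}}.
\]
Now Theorem \ref{thmcolbois} applied to $P_k$ gives $I_{P_k}^{1+2/n}\geq\lambda_1(P_k)\vol(P_k)^{2/n}/\gamma(n)>(t_0 V_S)^{2/n}\lambda_{k-1}(\S^n)/\gamma(n)$, which tends to $+\infty$ with $k$; hence the first summand in the denominator tends to $0$ and $I(\Sii_k)\to I_\infty$. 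Since moreover the denominator always strictly exceeds $\omega_{n+1}^{n/(n+1)}$, and the isoperimetric inequality (applied componentwise, together with subadditivity of $t\mapsto t^{n/(n+1)}$) gives $I(\Sii_k)\geq I_S$, we conclude $I_S\leq I(\Sii_k)<I_\infty$ for every such $k$, and therefore $f(I(\Sii_k))\leq F_0$.

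It then remains to assemble the pieces. By Weyl's law for $\S^n$, $\lambda_{k-1}(\S^n)V_S^{2/n}\sim W(n)k^{2/n}$, so since $(t_0+1)^{2/n}W(n)>2A$ there is $k_1$ with $\lambda_k(\Sii_k)\vol(\Sii_k)^{2/n}>\tfrac{3}{2}A k^{2/n}$ for all $k\geq k_1$. Choosing $k_0\geq\max\{2,k_1\}$ large enough that also $F_0\leq\tfrac{1}{2}A k_0^{2/n}$, one obtains for every $k\geq k_0$
\[
\lambda_k(\Sii_k)\,\vol(\Sii_k)^{2/n}>\tfrac{3}{2}A k^{2/n}=A k^{2/n}+\tfrac{1}{2}A k^{2/n}\geq A k^{2/n}+F_0\geq A k^{2/n}+f(I(\Sii_k)),
\]
which is the assertion. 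The main obstacle is the third paragraph: one must \emph{simultaneously} make $\lambda_k(\Sii_k)$ large (forcing a building block $P_k$ with huge $\lambda_1$, hence by Theorem \ref{thmcolbois} with huge isoperimetric ratio), make $\vol(\Sii_k)^{2/n}$ large enough to beat an arbitrary $A$ (forcing $P_k$ to carry a definite fraction of the total $n$-dimensional volume), and yet keep $I(\Sii_k)$ bounded. Fixing the volume ratio $t_0$ as a constant depending only on $A$ and $n$ is precisely what reconciles these: it makes the $(n+1)$-dimensional volume enclosed by $\rho_k P_k$ asymptotically negligible, so $I(\Sii_k)$ is trapped near $(t_0+1)I_S$, while $\lambda_k(\Sii_k)\vol(\Sii_k)^{2/n}$ stays of order $(t_0+1)^{2/n}W(n)k^{2/n}$, which exceeds $A k^{2/n}$ once $t_0$ is taken large enough.
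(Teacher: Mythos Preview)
Your proof is correct and follows essentially the same construction as the paper: take $\Sii_k=\rho_kP_k\sqcup\S^n$ with $\vol(\rho_kP_k)$ a fixed multiple of $\vol(\S^n)$ and $\lambda_1(\rho_kP_k)>\lambda_{k-1}(\S^n)$, identify $\lambda_k(\Sii_k)=\lambda_{k-1}(\S^n)$, apply Weyl's law on $\S^n$, and bound $I(\Sii_k)$ uniformly via $\vol(\text{interior})\geq\omega_{n+1}$. The one difference is your detour through Theorem~\ref{thmcolbois} to show $I_{P_k}\to\infty$ and hence $I(\Sii_k)\to I_\infty$; this is correct but superfluous, since (as you yourself note immediately afterwards) the denominator in your formula for $I(\Sii_k)$ always strictly exceeds $\omega_{n+1}^{n/(n+1)}$, which already gives $I(\Sii_k)\in[I_S,I_\infty)$ for every $k$---and that is all you actually use. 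The paper makes precisely this direct observation and never invokes Theorem~\ref{thmcolbois}, so the step you flag as the ``main obstacle'' is in fact immediate.
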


\bigskip
Recall result \ref{sup_lambda_1=infini} and let $M$ be a manifold of dimension $n$ which can be embedded in $\R^{n+1}$. Then there exists a sequence of embedding $X$ such that $\lambda_1 (X(M)) \vol(X(M))$ is as large as desired. If we call $\Sigma = X(M)$ the image of this embedding, it is equivalent to say:
\begin{equation}
\label{lambda_1_grand}
\forall L > 0, \exists \Sigma \text{ s.t. } \lambda_1(\Sigma)\vol(\Sigma)^{\frac{2}{n}} \geqslant L.
\end{equation}

Then for all $k$, there exists $\Sigma_k = X_k(\Sigma)$ such that
\[
\lambda_1(\Sigma_k)\vol(\Sigma_k)^{\frac{2}{n}}
\geqslant 4A\left(\lambda_k(\S^n)+1\right)\dfrac{\vol(\S^n)^{\frac{2}{n}}}{W(n)}\cdotp
\]
As $\lambda_1(\Sigma_k)\vol(\Sigma_k)^{\frac{2}{n}}$ is a homothetic invariant quantity, we can assume that
\[
\label{choix_lambda_1_et_volume}
\vol(\Sigma_k)^{\frac{2}{n}}=4A\dfrac{\vol(\S^n)^{\frac{2}{n}}}{W(n)} \ \ \ \ \ \ \ \ \ \ \ 
\lambda_1(\Sigma_k) \geqslant \lambda_k(\S^n)+1
\]

We will denote by $\Omega_k$ the domain of dimension $n+1$ such that $\partial \Omega_{k} = \Sigma_{k}$.
The notation ''$M\sqcup N$'' will designate the disjoint union of two manifolds $M$ and $N$. To this union we can associate its spectrum, constituted of the union of the two spectra of $M$ and $N$.
The notation $\Sii_k = \Sigma_k \sqcup \S^n$ will refer to the union of $\Sigma_k$ with the sphere $\S^n$.

\begin{lem}\ %
	
We have:
\begin{equation}
\label{egalite_sphere_union}
\left\{ \begin{array}{ll}
\lambda_0(\Sii_k)=\lambda_1(\Sii_k)=0& \\
 \lambda_j(\Sii_k)=\lambda_{j-1}(\S^n)\ \forall j=2\dots k &  \end{array} \right.
\end{equation}
\end{lem}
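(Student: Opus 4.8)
The plan is to reduce everything to the elementary fact that the Laplacian of a disjoint union of closed Riemannian manifolds is the orthogonal direct sum of the Laplacians of the pieces, so that its spectrum is just the two spectra interlaced into a single non-decreasing sequence. Concretely, I would start from the identification $L^2(\Sii_k) = L^2(\Sigma_k)\oplus L^2(\S^n)$, under which $\Delta_{\Sii_k} = \Delta_{\Sigma_k}\oplus\Delta_{\S^n}$; hence $(u,v)$ is an eigenfunction of $\Delta_{\Sii_k}$ for the value $\mu$ if and only if each nonzero component is an eigenfunction of the corresponding Laplacian for $\mu$. It follows that, listed with multiplicities in non-decreasing order, the spectrum of $\Sii_k$ is exactly the ordered merge of the sequences $(\lambda_i(\Sigma_k))_{i\ge 0}$ and $(\lambda_i(\S^n))_{i\ge 0}$.

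Next I would locate the bottom of the spectrum. Each of the two closed connected manifolds $\Sigma_k$ and $\S^n$ contributes a one-dimensional kernel (the constant functions), so $\ker\Delta_{\Sii_k}$ has dimension $2$; this gives $\lambda_0(\Sii_k)=\lambda_1(\Sii_k)=0$, while $\lambda_1(\Sigma_k)>0$ and $\lambda_1(\S^n)>0$ force $\lambda_2(\Sii_k)>0$.

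Finally I would bring in the normalization $\lambda_1(\Sigma_k)\ge\lambda_k(\S^n)+1$ chosen above. It makes each of the $k$ numbers $\lambda_1(\S^n)\le\cdots\le\lambda_k(\S^n)$ strictly smaller than $\lambda_1(\Sigma_k)$, hence strictly smaller than every positive eigenvalue of $\Sigma_k$. Therefore, in the ordered merge, the two zeros come first and are immediately followed by the block $\lambda_1(\S^n),\dots,\lambda_k(\S^n)$, before any eigenvalue of $\Sigma_k$ can appear. Reading off indices, this yields $\lambda_j(\Sii_k)=\lambda_{j-1}(\S^n)$ for $j=2,\dots,k+1$, in particular for $j=2,\dots,k$, which is \eqref{egalite_sphere_union}.

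I do not expect a genuine obstacle: the statement is essentially bookkeeping. The only points needing a little care are handling multiplicities correctly when merging the two sequences, and checking that the bound $\lambda_1(\Sigma_k)\ge\lambda_k(\S^n)+1$ is strict enough to keep the whole $\S^n$-block strictly ahead of the $\Sigma_k$-block — which it is, thanks to the extra $+1$. An alternative would be to argue through the min-max characterization of $\lambda_j(\Sii_k)$ with test functions supported on each component, but the direct-sum description of the spectrum is the shortest route.
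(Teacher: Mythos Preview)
Your argument is correct and follows the same route as the paper: use that the spectrum of a disjoint union is the ordered merge of the component spectra, then invoke the normalization $\lambda_1(\Sigma_k)\geqslant\lambda_k(\S^n)+1$ to see that the first $k+2$ eigenvalues of $\Sii_k$ are the two zeros followed by $\lambda_1(\S^n),\dots,\lambda_k(\S^n)$. The paper's own proof is in fact terser than yours, essentially just stating $\mathrm{Sp}(\Sigma_k\sqcup\S^n)=\mathrm{Sp}(\Sigma_k)\cup\mathrm{Sp}(\S^n)$ and displaying a picture of the merged spectrum.
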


\begin{proof}
We know that $\text{Sp}(\Sigma_k \sqcup \S^n)=\text{Sp}(\Sigma_k) \cup \text{Sp}(\S^n)$ and also that $\lambda_1(\Sigma_k) \geqslant \lambda_k(\S^n)+1$. As a consequence the beginning of the spectrum of the union  of $\S^n$ with $\Sigma_k$ can be represented as follows:
%%%%%%%%%%%%%%%%%%%%%%%%%%%%%%%%%%%%%%%%%%%%%%%%%%%%%%%%%%%%%%%%%%%%%%%%%%
%%%%%%%%%%%%%%%%%%%%%%%%%%%%%%%%%%%%%%%%%%%%%%%%%%%%%%%%%%%%%%%%%%%%%%%%%%
\begin{center}
\def\svgscale{0.8}
%% Creator: Inkscape 0.48.5, www.inkscape.org
%% PDF/EPS/PS + LaTeX output extension by Johan Engelen, 2010
%% Accompanies image file '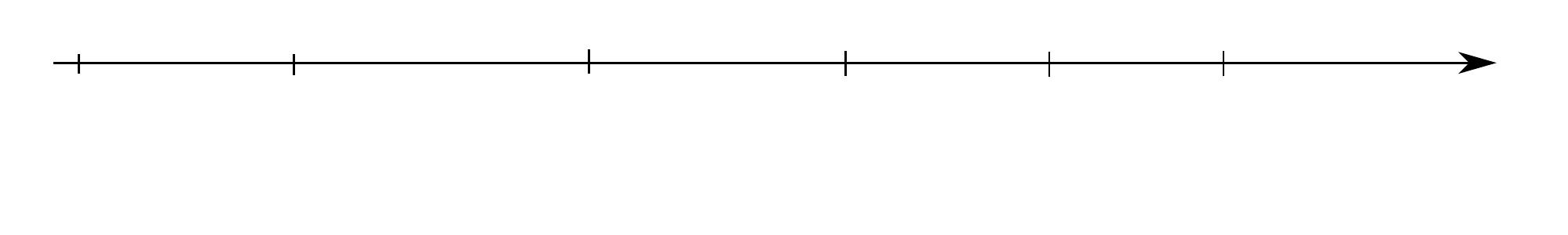' (pdf, eps, ps)
%%
%% To include the image in your LaTeX document, write
%%   \input{<filename>.pdf_tex}
%%  instead of
%%   \includegraphics{<filename>.pdf}
%% To scale the image, write
%%   \def\svgwidth{<desired width>}
%%   \input{<filename>.pdf_tex}
%%  instead of
%%   \includegraphics[width=<desired width>]{<filename>.pdf}
%%
%% Images with a different path to the parent latex file can
%% be accessed with the `import' package (which may need to be
%% installed) using
%%   \usepackage{import}
%% in the preamble, and then including the image with
%%   \import{<path to file>}{<filename>.pdf_tex}
%% Alternatively, one can specify
%%   \graphicspath{{<path to file>/}}
%% 
%% For more information, please see info/svg-inkscape on CTAN:
%%   http://tug.ctan.org/tex-archive/info/svg-inkscape
%%
\begingroup%
  \makeatletter%
  \providecommand\color[2][]{%
    \errmessage{(Inkscape) Color is used for the text in Inkscape, but the package 'color.sty' is not loaded}%
    \renewcommand\color[2][]{}%
  }%
  \providecommand\transparent[1]{%
    \errmessage{(Inkscape) Transparency is used (non-zero) for the text in Inkscape, but the package 'transparent.sty' is not loaded}%
    \renewcommand\transparent[1]{}%
  }%
  \providecommand\rotatebox[2]{#2}%
  \ifx\svgwidth\undefined%
    \setlength{\unitlength}{575.37069937bp}%
    \ifx\svgscale\undefined%
      \relax%
    \else%
      \setlength{\unitlength}{\unitlength * \real{\svgscale}}%
    \fi%
  \else%
    \setlength{\unitlength}{\svgwidth}%
  \fi%
  \global\let\svgwidth\undefined%
  \global\let\svgscale\undefined%
  \makeatother%
  \begin{picture}(1,0.15014879)%
    \put(0,0){\includegraphics[width=\unitlength]{img_spectre_decale.pdf}}%
    \put(0.04246584,0.09726561){\color[rgb]{0,0,0}\makebox(0,0)[lt]{\begin{minipage}{0.02260636\unitlength}\raggedright $0$\end{minipage}}}%
    \put(0.00015911,0.0723198){\color[rgb]{0,0,0}\makebox(0,0)[lt]{\begin{minipage}{0.17641954\unitlength}\raggedright $=\lambda_0(\Sii_k)$\end{minipage}}}%
    \put(0.00003927,0.04235039){\color[rgb]{0,0,0}\makebox(0,0)[lt]{\begin{minipage}{0.17666784\unitlength}\raggedright $=\lambda_1(\Sii_k)$\end{minipage}}}%
    \put(0.15089043,0.0914236){\color[rgb]{0,0,0}\makebox(0,0)[lt]{\begin{minipage}{0.17666784\unitlength}\raggedright $\lambda_2(\Sii_k)$\end{minipage}}}%
    \put(0.34222861,0.09072134){\color[rgb]{0,0,0}\makebox(0,0)[lt]{\begin{minipage}{0.17666784\unitlength}\raggedright $\lambda_3(\Sii_k)$\end{minipage}}}%
    \put(0.52534018,0.09288663){\color[rgb]{0,0,0}\makebox(0,0)[lt]{\begin{minipage}{0.04302639\unitlength}\raggedright $\cdots$\end{minipage}}}%
    \put(0.6248644,0.09069311){\color[rgb]{0,0,0}\makebox(0,0)[lt]{\begin{minipage}{0.19633118\unitlength}\raggedright $\lambda_{k+1}(\Sii_k)$\end{minipage}}}%
    \put(0.64401416,0.148755){\color[rgb]{0,0,0}\makebox(0,0)[lt]{\begin{minipage}{0.09304373\unitlength}\raggedright $\lambda_k(\S^n)$\end{minipage}}}%
    \put(0.35127969,0.15015952){\color[rgb]{0,0,0}\makebox(0,0)[lt]{\begin{minipage}{0.10012128\unitlength}\raggedright $\lambda_2(\S^n)$\end{minipage}}}%
    \put(0.1627681,0.1480245){\color[rgb]{0,0,0}\makebox(0,0)[lt]{\begin{minipage}{0.10012128\unitlength}\raggedright $\lambda_1(\S^n)$\end{minipage}}}%
    \put(0.02652926,0.1480245){\color[rgb]{0,0,0}\makebox(0,0)[lt]{\begin{minipage}{0.10012128\unitlength}\raggedright $\lambda_0(\S^n)$\end{minipage}}}%
    \put(0.7570182,0.15073137){\color[rgb]{0,0,0}\makebox(0,0)[lt]{\begin{minipage}{0.25691324\unitlength}\raggedright $\lambda_1(\Sigma) \geqslant \lambda_k(\S^n)+1$\end{minipage}}}%
    \put(-0.00038194,0.01128103){\color[rgb]{0,0,0}\makebox(0,0)[lt]{\begin{minipage}{0.11205973\unitlength}\raggedright $=\lambda_0(\Sigma)$\end{minipage}}}%
  \end{picture}%
\endgroup%


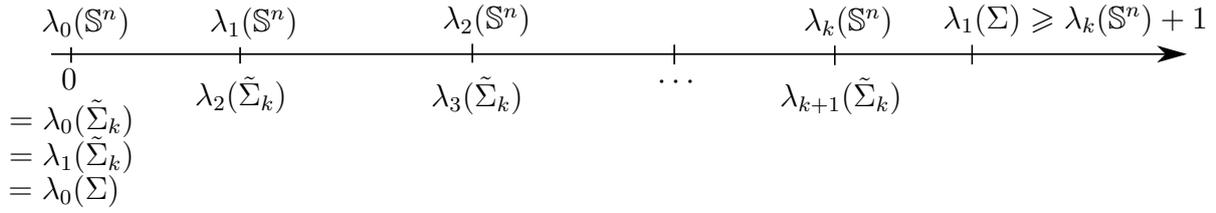
\captionof{figure}{The eigenvalues are shifted}
\label{sp_decale}
\end{center}
%%%%%%%%%%%%%%%%%%%%%%%%%%%%%%%%%%%%%%%%%%%%%%%%%%%%%%%%%%%%%%%%%%%%%%%%%%
%%%%%%%%%%%%%%%%%%%%%%%%%%%%%%%%%%%%%%%%%%%%%%%%%%%%%%%%%%%%%%%%%%%%%%%%%%
\end{proof}

We can now prove the following lemma:
\begin{lem}
There exists $k_1 > 0$ such that $\forall k\geqslant k_1$ we have
\[
\lambda_k(\Sii_k)\vol(\Sii_k)^{\frac{2}{n}}
> 2Ak^{\frac{2}{n}}.
\]
\end{lem}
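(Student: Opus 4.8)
The plan is to combine the spectral picture of the previous lemma with Weyl's law on the round sphere. By \eqref{egalite_sphere_union} we have $\lambda_k(\Sii_k)=\lambda_{k-1}(\S^n)$ for $k\geqslant 2$, and since $\Sii_k=\Sigma_k\sqcup\S^n$ its volume is $\vol(\Sii_k)=\vol(\Sigma_k)+\vol(\S^n)\geqslant\vol(\Sigma_k)$; together with the normalisation $\vol(\Sigma_k)^{\frac{2}{n}}=4A\,\vol(\S^n)^{\frac{2}{n}}/W(n)$ fixed above, this gives
\[
\lambda_k(\Sii_k)\,\vol(\Sii_k)^{\frac{2}{n}}\;\geqslant\;\lambda_{k-1}(\S^n)\cdot 4A\,\frac{\vol(\S^n)^{\frac{2}{n}}}{W(n)} .
\]
The point is that the right-hand side no longer depends on the auxiliary hypersurfaces $\Sigma_k$, but only on the sphere.

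Next I would apply Weyl's law to $\S^n$. Since $\lambda_j(\S^n)\sim W(n)\big(j/\vol(\S^n)\big)^{\frac{2}{n}}$ as $j\to\infty$ and $(1-1/k)^{\frac{2}{n}}\to 1$, we get $\lambda_{k-1}(\S^n)=(1+o(1))\,W(n)\big(k/\vol(\S^n)\big)^{\frac{2}{n}}$, so there is $k_1\in\N$ such that
\[
\lambda_{k-1}(\S^n)\;\geqslant\;\tfrac{3}{4}\,W(n)\left(\frac{k}{\vol(\S^n)}\right)^{\frac{2}{n}}\qquad\text{for all }k\geqslant k_1 .
\]
Substituting this into the previous inequality, the factors $W(n)$ and $\vol(\S^n)^{\frac{2}{n}}$ cancel and one obtains, for every $k\geqslant k_1$,
\[
\lambda_k(\Sii_k)\,\vol(\Sii_k)^{\frac{2}{n}}\;\geqslant\;\tfrac{3}{4}\cdot 4A\,k^{\frac{2}{n}}\;=\;3A\,k^{\frac{2}{n}}\;>\;2A\,k^{\frac{2}{n}},
\]
which is the claim.

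I do not expect a serious obstacle here; the only points to watch are that the eigenvalue index on $\Sii_k$ is shifted by one (so one must pass from $\lambda_{k-1}(\S^n)$ to a multiple of $k^{\frac{2}{n}}$, which costs nothing in the limit) and that the normalisation constant $4A$ was deliberately chosen with enough slack to absorb both this shift and the loss in the Weyl asymptotic — any constant strictly between $2$ and $4$ in front of $k^{\frac{2}{n}}$ would do. It is worth recording that $k_1$ depends only on the dimension $n$, not on $A$ or on $f$, since the surviving estimate is purely the Weyl asymptotic for the fixed manifold $\S^n$.
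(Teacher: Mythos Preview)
Your proof is correct and follows essentially the same route as the paper: bound $\vol(\Sii_k)^{2/n}$ below by the normalised $\vol(\Sigma_k)^{2/n}=4A\,\vol(\S^n)^{2/n}/W(n)$, replace $\lambda_k(\Sii_k)$ by $\lambda_{k-1}(\S^n)$ via the previous lemma, and then invoke Weyl's law on $\S^n$ to absorb the remaining factors. The only cosmetic difference is that the paper takes the Weyl threshold $\tfrac12$ instead of your $\tfrac34$, landing exactly on $2Ak^{2/n}$ rather than $3Ak^{2/n}$; your remark that $k_1$ depends only on $n$ is correct and worth keeping.
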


\begin{proof}
	By the choice of the volume of $\Sigma_k$ as above, we get:
\[
\vol(\Sii_k)^{\frac{2}{n}}
\geqslant \vol(\Sigma_k)^{\frac{2}{n}}
= 4A\dfrac{\vol(\S^n)^{\frac{2}{n}}}{W(n)}\cdot
\]
Therefore
\[
\dfrac{\lambda_k(\Sii_k)\vol(\Sii_k)^{\frac{2}{n}}}{k^{\frac{2}{n}}}
= \dfrac{\lambda_{k-1}(\S^n)\vol(\Sii_k)^{\frac{2}{n}}}{k^{\frac{2}{n}}}
\geqslant 4A\dfrac{\lambda_{k-1}(\S^n)\vol(\S^n)^{\frac{2}{n}}}{W(n)k^{\frac{2}{n}}}\cdotp
\]
But we know Weyl's asymptotic law for the spectrum of $\S^n$ gives:
\[
\underset{k \to \infty}{\lim}
\dfrac{\lambda_{k-1}(\S^n)\vol(\S^n)^{\frac{2}{n}}}{W(n)k^{\frac{2}{n}}} = 1.
\]
So there exists $k_1 > 0$ such that $\forall k \geqslant k_1$, 
\[
\dfrac{\lambda_{k-1}(\S^n)\vol(\S^n)^{\frac{2}{n}}}{W(n)k^{\frac{2}{n}}} > \dfrac{1}{2}
\]
so it is quite clear that for $k \geqslant k_1$,
\[
\lambda_k(\Sii_k)\vol(\Sii_k)^{\frac{2}{n}}
> \dfrac{1}{2} 4Ak^{\frac{2}{n}}
=2Ak^{\frac{2}{n}}.
\]
\end{proof}

\begin{lem}
The isoperimetric ratio of $\Sii_k$ is bounded as follows:
\[
I(\S^n)
\leqslant I(\Sii_k)
\leqslant (n+1)^{\frac{n}{n+1}} \left(\left(\dfrac{4A}{W(n)}\right)^{\frac{n}{2}}+1\right)\vol(\S^n)^{\frac{1}{n+1}}
\]
Note that the bounds do not depend on $k$.
\end{lem}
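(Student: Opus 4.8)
The plan is to evaluate $I(\Sii_k)$ directly from the definition. Realise $\Sii_k=\Sigma_k\sqcup\S^n$ as the boundary of $\Omega_k\sqcup\B^{n+1}$, where $\B^{n+1}\subset\R^{n+1}$ is the unit ball, so that $\partial\B^{n+1}=\S^n$, $\vol_{n+1}(\B^{n+1})=\omega_{n+1}$ and $\vol_n(\S^n)=(n+1)\omega_{n+1}$. Since both the $n$-volume and the $(n+1)$-volume are additive over a disjoint union,
\[
I(\Sii_k)=\frac{\vol(\Sigma_k)+\vol(\S^n)}{\bigl(\vol_{n+1}(\Omega_k)+\omega_{n+1}\bigr)^{\frac{n}{n+1}}}.
\]

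For the upper bound I would bound the denominator from below by discarding $\vol_{n+1}(\Omega_k)\geqslant 0$, so that it is at least $\omega_{n+1}^{\frac{n}{n+1}}$. For the numerator, the normalisation $\vol(\Sigma_k)^{\frac2n}=4A\,\vol(\S^n)^{\frac2n}/W(n)$ gives $\vol(\Sigma_k)=(4A/W(n))^{\frac n2}\vol(\S^n)$, hence $\vol(\Sigma_k)+\vol(\S^n)=\bigl((4A/W(n))^{\frac n2}+1\bigr)\vol(\S^n)$. Therefore
\[
I(\Sii_k)\leqslant\Bigl(\bigl(\tfrac{4A}{W(n)}\bigr)^{\frac n2}+1\Bigr)\frac{\vol(\S^n)}{\omega_{n+1}^{\frac{n}{n+1}}},
\]
and substituting $\omega_{n+1}=\vol(\S^n)/(n+1)$, i.e. $\omega_{n+1}^{\frac{n}{n+1}}=\vol(\S^n)^{\frac{n}{n+1}}/(n+1)^{\frac{n}{n+1}}$, rewrites the right-hand side as $(n+1)^{\frac{n}{n+1}}\bigl((4A/W(n))^{\frac n2}+1\bigr)\vol(\S^n)^{\frac1{n+1}}$, which is the claimed bound; the dependence on $k$ has disappeared precisely because $\vol(\Sigma_k)$ was pinned down by the normalisation.

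For the lower bound I would apply the sharp Euclidean isoperimetric inequality to each component separately: for a bounded domain $U\subset\R^{n+1}$ with smooth boundary, $\vol_n(\partial U)\geqslant(n+1)\omega_{n+1}^{\frac1{n+1}}\vol_{n+1}(U)^{\frac{n}{n+1}}$, with equality for balls, so that $I(\S^n)=(n+1)\omega_{n+1}^{\frac1{n+1}}$ is the minimal isoperimetric ratio. Writing $V_1=\vol_{n+1}(\Omega_k)$ and $V_2=\omega_{n+1}$ this gives
\[
\vol(\Sigma_k)+\vol(\S^n)\geqslant(n+1)\omega_{n+1}^{\frac1{n+1}}\bigl(V_1^{\frac{n}{n+1}}+V_2^{\frac{n}{n+1}}\bigr).
\]
Since $t\mapsto t^{\frac{n}{n+1}}$ is concave on $[0,\infty)$ and vanishes at $0$, it is subadditive, so $V_1^{\frac{n}{n+1}}+V_2^{\frac{n}{n+1}}\geqslant(V_1+V_2)^{\frac{n}{n+1}}$; dividing by $(V_1+V_2)^{\frac{n}{n+1}}$ yields $I(\Sii_k)\geqslant(n+1)\omega_{n+1}^{\frac1{n+1}}=I(\S^n)$.

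There is no genuine obstacle here; the two points that require care are the normalisation convention $\vol_n(\S^n)=(n+1)\omega_{n+1}$ — it is exactly this identity that makes the stated constant come out after the algebraic simplification — and the subadditivity of $t\mapsto t^{\frac{n}{n+1}}$, which is what allows the disconnectedness of $\Omega_k\sqcup\B^{n+1}$ to be absorbed harmlessly in the lower bound.
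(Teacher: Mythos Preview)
Your argument is correct and matches the paper's almost exactly: the upper bound is obtained identically by discarding $\vol_{n+1}(\Omega_k)$ and using the normalisation of $\vol(\Sigma_k)$ together with $\omega_{n+1}=\vol(\S^n)/(n+1)$. For the lower bound the paper simply invokes the isoperimetric inequality in $\R^{n+1}$ (which already holds for disconnected domains), whereas you apply it to each component and then use subadditivity of $t\mapsto t^{n/(n+1)}$; this is just a slightly more explicit derivation of the same inequality, not a genuinely different route.
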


\begin{proof}\ %

The left-hand side is simply the isoperimetric inequality in $\R^{n+1}$.
As for the right-hand side, let us call $\Omega_k$ the interior of $\Sigma_k$. Then the interior of $\Sii_k$ is $\Omega_k \sqcup \B^{n+1}$.

Consequently $\vol(\Omega_k \sqcup \B^{n+1}) \geqslant \vol(\B^{n+1}) = \frac{1}{n+1}\vol(\S^n)$

and $\vol(\Sii_k) = \vol(\Sigma_k) + \vol(\S^n)= \left(\frac{4A}{W(n)}\right)^{\frac{n}{2}}\vol(\S^n) + \vol(\S^n) = \left(\left(\frac{4A}{W(n)}\right)^{\frac{n}{2}}+1\right)\vol(\S^n)$.

Then
\begin{align*}
 I(\Sii_k) & = \dfrac{\vol(\Sii_k)}{\vol(\Omega_k \sqcup \B^{n+1})^{\frac{n}{n+1}}}
 	= \dfrac{\left(\left(\frac{4A}{W(n)}\right)^{\frac{n}{2}}+1\right)\vol(\S^n)}{\vol(\Omega_k \sqcup \B^{n+1})^{\frac{n}{n+1}}}\\
   & \leqslant \dfrac{\left(\left(\frac{4A}{W(n)}\right)^{\frac{n}{2}}+1\right)\vol(\S^n)}{\left(\frac{1}{n+1}\vol(\S^n)\right)^{\frac{n}{n+1}}}
    = (n+1)^{\frac{n}{n+1}} \left(\left(\frac{4A}{W(n)}\right)+1\right)^{\frac{n}{2}}\vol(\S^n)^{\frac{1}{n+1}}.
\end{align*}

\end{proof}

Now we can prove Theorem \ref{theoreme_cas_non_connexe} for non-connected hypersurfaces.

\begin{proof}\ %

The isoperimetric ratio $I(\Sii_k)$ belongs to a closed and bounded interval for all $k$. So for all continuous functions $f:\left( 0,+\infty \right)\to \R$, we know that $f(I(\Sii_k))$ also belongs to a closed and bounded interval. In particular there exists $k_2 > 0$ such that for all $k \geqslant k_2, f(I(\Sii_k)) < Ak^{\frac{2}{n}}$.

If we take $k_0 = \max(k_1,k_2)$, we thus have $\forall k \geqslant k_0$,
\[
\lambda_k(\Sii_k)\vol(\Sii_k)^{\frac{2}{n}}
> 2Ak^{\frac{2}{n}}
> f(I(\Sii_k)) + Ak^{\frac{2}{n}}.
\]
\end{proof}

\section{Tubular attachment}
\label{section_tubular_attachment}

We just saw an example where the hypersurface was disconnected. We can obtain the same result for a connected hypersurface, simply by attaching the two components of $\bar \Sigma_k$. This is possible while keeping control on the spectrum and on the isoperimetric ratio, and we are going to show that in the following proposition. The construction of $\Sigma$ being simple but technical, the reader may refer to the figure \ref{gluing} for a visual intuition.

\begin{prop}\ %

Let $\Sigma_1$ and $\Sigma_2$ be two compact and connected hypersurfaces of $\R^{n+1}$. For all $\epsilon > 0$ and for all $N \in \N^*$, there exists a hypersurface $\Sigma = \Sigma_{\epsilon,N}$ such that
\[
\left|I(\Sigma_{\epsilon,N})-I(\Sigma_1 \sqcup \Sigma_2)\right| < \epsilon
\]
and $\forall k \leqslant N$
\[
\left|\lambda_k(\Sigma_{\epsilon,N}) - \lambda_k(\Sigma_1 \sqcup \Sigma_2)\right|<\epsilon.
\]
\end{prop}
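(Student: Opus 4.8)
The plan is to construct $\Sigma_{\epsilon,N}$ explicitly by joining $\Sigma_1$ and $\Sigma_2$ with a thin cylindrical tube, and to control both the isoperimetric ratio and the spectrum by making the tube's cross-sectional radius $\delta$ sufficiently small. First I would set up the geometry: after applying rigid motions and rescalings one may assume $\Sigma_1$ and $\Sigma_2$ are disjoint and that there is a straight segment joining a point $p_1 \in \Sigma_1$ to $p_2 \in \Sigma_2$ meeting each hypersurface orthogonally and meeting neither anywhere else. Remove a small geodesic disc of radius $\approx \delta$ around each $p_i$, and glue in a tube diffeomorphic to $\S^{n-1} \times [0,1]$ of radius $\delta$ that interpolates smoothly (keeping $\mathscr{C}^2$ regularity, and remaining embedded) between the two boundary spheres; call the result $\Sigma_\delta$. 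The associated domain $\Omega_\delta$ is $\Omega_1 \sqcup \Omega_2$ with a thin solid tube added and two small caps removed.

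The volume/isoperimetric estimate is the easy part: $\vol_n(\Sigma_\delta) = \vol_n(\Sigma_1) + \vol_n(\Sigma_2) + O(\delta^{n-1})$ since the tube has $n$-area $O(\delta^{n-1})$ and the removed discs have $n$-area $O(\delta^n)$, while $\vol_{n+1}(\Omega_\delta) = \vol_{n+1}(\Omega_1) + \vol_{n+1}(\Omega_2) + O(\delta^n)$. Since $n \geqslant 3 > 1$, both error terms tend to $0$, so $I(\Sigma_\delta) \to I(\Sigma_1 \sqcup \Sigma_2)$ as $\delta \to 0$; choosing $\delta$ small enough gives $|I(\Sigma_\delta) - I(\Sigma_1 \sqcup \Sigma_2)| < \epsilon$.

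The spectral estimate is the main obstacle and requires a two-sided bound via the min-max (Rayleigh) characterization. For the upper bound $\lambda_k(\Sigma_\delta) \leqslant \lambda_k(\Sigma_1 \sqcup \Sigma_2) + o(1)$: take the first $k$ eigenfunctions on $\Sigma_1 \sqcup \Sigma_2$, cut them off to vanish near the gluing discs using a logarithmic (for $n=2$) or, here with $n \geqslant 3$, a capacity-type cutoff whose Dirichlet energy on the removed region is $O(\delta^{n-2})$, extend by the cutoff profile across the tube, and verify these $k+1$ functions are nearly orthogonal and have Rayleigh quotients close to $\lambda_j(\Sigma_1 \sqcup \Sigma_2)$; the small perturbation of the metric and the vanishing energy of the corrections give the bound. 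For the lower bound one argues that a low-energy, normalized function on $\Sigma_\delta$ must be nearly constant on the tube (the tube has small width, so its own Neumann spectrum above $0$ blows up like $\delta^{-2}$, forcing concentration of low-frequency mass away from it), hence restricting and slightly correcting it yields a test function on $\Sigma_1 \sqcup \Sigma_2$ with comparable Rayleigh quotient; a Dirichlet–Neumann bracketing or a direct $H^1$-extension/restriction argument across the collar makes this quantitative. Combining both inequalities, $\lambda_k(\Sigma_\delta) \to \lambda_k(\Sigma_1 \sqcup \Sigma_2)$ for each fixed $k$, uniformly over $k \leqslant N$ since that is a finite set; picking $\delta$ small enough to handle $I$ and all $k \leqslant N$ simultaneously finishes the construction with $\Sigma_{\epsilon,N} := \Sigma_\delta$.

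The delicate point throughout is that the tube's contribution to the Rayleigh quotient must be genuinely negligible: this is exactly why $n \geqslant 3$ matters, since the relevant capacity of a thin tube of codimension-one-in-$\Sigma$ cross-section behaves like $\delta^{n-2}$, which vanishes; one must also ensure the gluing can be done while preserving embeddedness and $\mathscr{C}^2$-smoothness of the boundary, which is a routine but careful interpolation of the defining function near $p_1$ and $p_2$.
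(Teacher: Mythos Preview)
Your construction and the isoperimetric estimate are essentially the same as the paper's, and your upper bound $\lambda_k(\Sigma_\delta)\leqslant\lambda_k(\Sigma_1\sqcup\Sigma_2)+o(1)$ via cut-off test functions is fine. The gap is in the lower bound. Your claim that ``the tube's own Neumann spectrum above $0$ blows up like $\delta^{-2}$'' is false: on a tube $\S^{n-1}_\delta\times[-h,h]$ only the \emph{transverse} modes (non-constant on the $\S^{n-1}$ factor) scale like $\delta^{-2}$, while the \emph{longitudinal} modes --- functions depending only on the axial coordinate --- have eigenvalues $(j\pi/(2h))^2$, independent of $\delta$. Concretely, a function of the form $\delta^{-(n-1)/2}\sin\!\big(\pi(x_{n+1}+h)/(2h)\big)$ supported on the tube (vanishing at the attaching spheres, so no capacity cost to match) is $L^2$-normalized with Rayleigh quotient $\approx(\pi/(2h))^2$ uniformly in $\delta$. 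Thus low-energy mass \emph{can} concentrate on the tube, and in general $\lambda_k(\Sigma_\delta)$ does not converge to $\lambda_k(\Sigma_1\sqcup\Sigma_2)$.

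The paper handles exactly this point by invoking the results of Ann\'e: as $\delta\to 0$ with the tube length $2h$ fixed, the spectrum of $\Sigma^{\delta,h}$ converges to the spectrum of $\Sigma_1\sqcup\Sigma_2$ \emph{together with} the Dirichlet spectrum of the limiting segment $[-h,h]$. The extra eigenvalues $\pi^2/(4h^2),\,\pi^2/h^2,\ldots$ are then pushed above $\lambda_N(\Sigma_1\sqcup\Sigma_2)$ by choosing $h$ small enough, so that the first $N$ eigenvalues of the limit coincide with those of $\Sigma_1\sqcup\Sigma_2$. Your argument has only the single parameter $\delta$ and a tube of fixed length $|p_1p_2|$; to repair it you must either introduce the tube length as a second small parameter (first send $\delta\to 0$, then $h\to 0$), or account explicitly for the longitudinal modes in your min--max lower bound.
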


\begin{proof}\ %

Let $h>0$. Without loss of generality we can assume that $\Sigma_1 \subset \{x_{n+1}\geqslant h\}$ and $\Sigma_2 \subset \{x_{n+1}\leqslant -h\}$ and that the points $(0,\dots,0,h)$ and $(0,\dots,0,-h)$ belong to $\Sigma_1$ and $\Sigma_2$ respectively. We note $p=(0,\dots,0,h)$ and $q=(0,\dots,0,-h)$.

A hypersurface can be locally represented by a graph. As a consequence, for all $\delta > 0$ small enough, there exists a function  $f:\B^n(0,4\delta) \subset \R^n \longrightarrow \R$ such that the set $V_1 = \{(x,f(x)), x\in \B^n(0,4\delta)\}$ is an open set of $p$ in $\Sigma_1$.

Note that it naturally implies $f(0,\dots,0) = h$ and $\forall x\in \B^n(0,4\delta), f(x) \geqslant h$.

Now let $f_1 \in \mathscr{C}^{\infty}(\B^n(0,4\delta),\R)$ such that $f_1 \geqslant h$ and
\[
f_1(x) = \left\{
    \begin{array}{ll}
        f(x) & \text{if } x \in \B^n(0,4\delta)\setminus \B^n(0,2\delta)\\
        h & \text{if } x \in \B^n(0,\delta)
    \end{array}
\right.
\]
Let us call $V_1^{\delta} = \{(x,f_1(x)), x\in \B^n(0,4\delta)\}$ the graph of this new function $f_1$. It will generate a new hypersurface $\Sigma_1^{\delta}$ equal to $\left(\Sigma_1\setminus V_1\right) \cup V_1^{\delta}$, flat on the open set $\B^n(0,\delta)\times \{h\}$.

\medskip
We modify $\Sigma_2$ around $q$ on an open set $V_2$ in the same way, in order to create an open set $V_2^{\delta}$. Thus we form a hypersurface $\Sigma_2^{\delta}=\left(\Sigma_2\setminus V_2\right) \cup V_2^{\delta}$, flat on the open set $\B^n(0,\delta)\times \{-h\}$.

For the following we will use the notation $B_1^{\delta} = \B^n(0,\delta)\times \{h\} \subset \Sigma_1^{\delta}$ and $B_2^{\delta} = \B^n(0,\delta)\times \{-h\}\subset \Sigma_2^{\delta}$. We now just have to glue the two parts.

For this purpose we call  $R^{\delta,h}$ the surface of revolution defined by
\[
R^{\delta,h}=\left\{(x_1,\dots,x_{n+1}) : x_1^2+\dots+x_n^2=\varphi^2(x_{n+1}) \text{ et } -h\leqslant x_{n+1} \leqslant h\right\}
\]
where $\varphi : (-h,h) \longrightarrow \R$ is an even, increasing function, bounded from above by $\delta$ on $[0,h]$ and such that $\varphi\left(\left\lbrack -\frac{h}{2},\frac{h}{2} \right\rbrack\right) = \frac{\delta}{2}$. We also choose $\varphi$ such that $\Sigma^{\delta,h}$ is $\mathscr{C}^{\infty}$ in the neighbourhood of $\partial R^{\delta,h}$.
What we get is a connected hypersurface by attaching $\Sigma_1^{\delta}$, $\Sigma_2^{\delta}$ and $R^{\delta,h}$:
\[
\Sigma^{\delta,h} = \Sigma_1^{\delta} \cup R^{\delta,h} \cup \Sigma_2^{\delta}
\]
The results of Colette Anné (see \cite{colette_anne} section C.I and \cite{colette_anne_bis}) will allow us to conclude. When $\delta$ tends to $0$, the spectrum of $\Sigma^{\delta,h}$ tends to the spectrum of the disjoint union of $\Sigma_1$, $\Sigma_2$, and the segment $[-h,h]$ with Dirichlet boundary conditions.

The first eivenvalue $\lambda_1(h)$ of the segment $[-h,h]$ for these conditions is equal to $\frac{\pi^2}{4h^2}\cdot$ As a consequence, if we take an integer $N>0$, there exists $h>0$ small enough such that $\lambda_1(h) > \lambda_N(\Sigma_1 \sqcup \Sigma_2)$.
Finally, for all $\epsilon, N > 0$, there exist $\delta>0$ and $h>0$ small enough such that
\[
\left|\lambda_k(\Sigma^{\delta,h})-\lambda_k(\Sigma_1 \sqcup \Sigma_2)\right| < \epsilon \ \ \ \ \ \ \  k=0,\dots,N.
\]

It is also clear that the volume of $\Sigma^{\delta,h}$ converges, when $\delta$ tends to $0$, to the volume of $\Sigma_1 \sqcup \Sigma_2$. Likewise the volume of the interior of $\Sigma^{\delta,h}$ converges to the volume of the interior of $\Sigma_1 \sqcup \Sigma_2$ when $\delta$ tends to $0$.
It shows that $I\left(\Sigma^{\delta,h}\right)$ tends to $I\left(\Sigma_1 \sqcup \Sigma_2\right)$ when $\delta$ tends to $0$.
\end{proof}
Theorem \ref{theoreme_connexe} in the connected case follows immediately.

\begin{center}
\label{gluing}
\def\svgscale{0.85}
%% Creator: Inkscape 0.48.5, www.inkscape.org
%% PDF/EPS/PS + LaTeX output extension by Johan Engelen, 2010
%% Accompanies image file '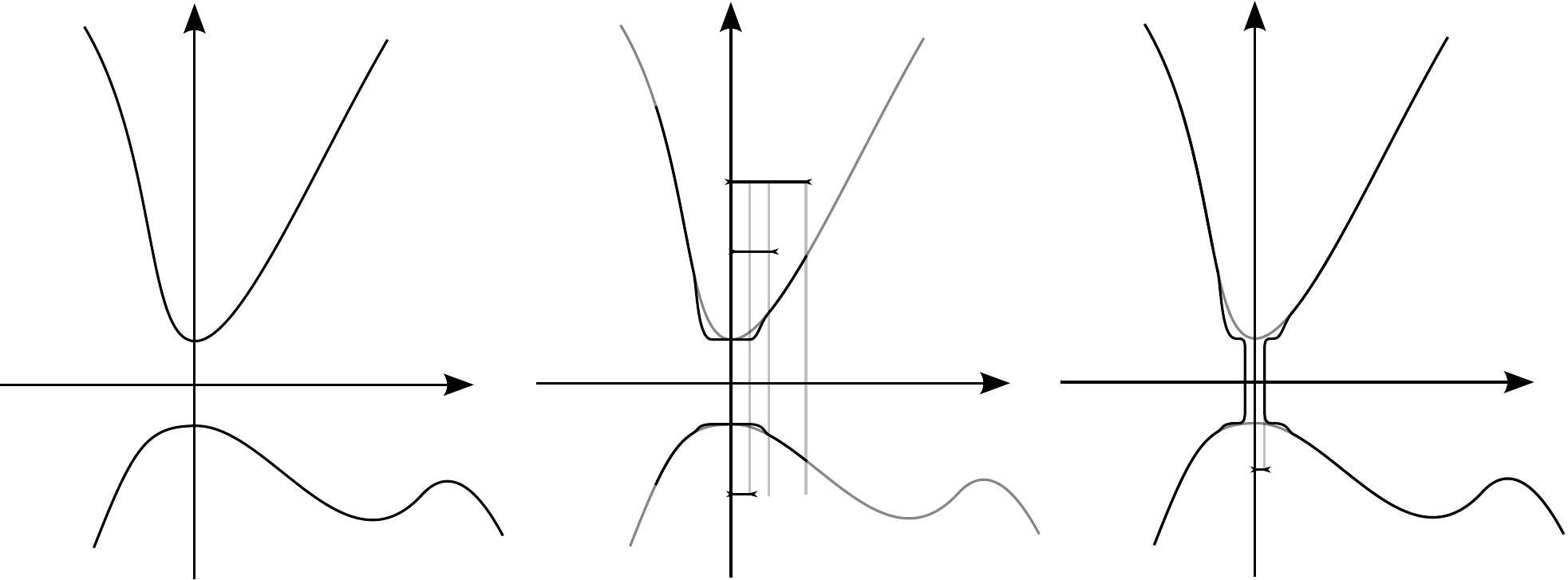' (pdf, eps, ps)
%%
%% To include the image in your LaTeX document, write
%%   \input{<filename>.pdf_tex}
%%  instead of
%%   \includegraphics{<filename>.pdf}
%% To scale the image, write
%%   \def\svgwidth{<desired width>}
%%   \input{<filename>.pdf_tex}
%%  instead of
%%   \includegraphics[width=<desired width>]{<filename>.pdf}
%%
%% Images with a different path to the parent latex file can
%% be accessed with the `import' package (which may need to be
%% installed) using
%%   \usepackage{import}
%% in the preamble, and then including the image with
%%   \import{<path to file>}{<filename>.pdf_tex}
%% Alternatively, one can specify
%%   \graphicspath{{<path to file>/}}
%% 
%% For more information, please see info/svg-inkscape on CTAN:
%%   http://tug.ctan.org/tex-archive/info/svg-inkscape
%%
\begingroup%
  \makeatletter%
  \providecommand\color[2][]{%
    \errmessage{(Inkscape) Color is used for the text in Inkscape, but the package 'color.sty' is not loaded}%
    \renewcommand\color[2][]{}%
  }%
  \providecommand\transparent[1]{%
    \errmessage{(Inkscape) Transparency is used (non-zero) for the text in Inkscape, but the package 'transparent.sty' is not loaded}%
    \renewcommand\transparent[1]{}%
  }%
  \providecommand\rotatebox[2]{#2}%
  \ifx\svgwidth\undefined%
    \setlength{\unitlength}{565.91528444bp}%
    \ifx\svgscale\undefined%
      \relax%
    \else%
      \setlength{\unitlength}{\unitlength * \real{\svgscale}}%
    \fi%
  \else%
    \setlength{\unitlength}{\svgwidth}%
  \fi%
  \global\let\svgwidth\undefined%
  \global\let\svgscale\undefined%
  \makeatother%
  \begin{picture}(1,0.36963903)%
    \put(0,0){\includegraphics[width=\unitlength]{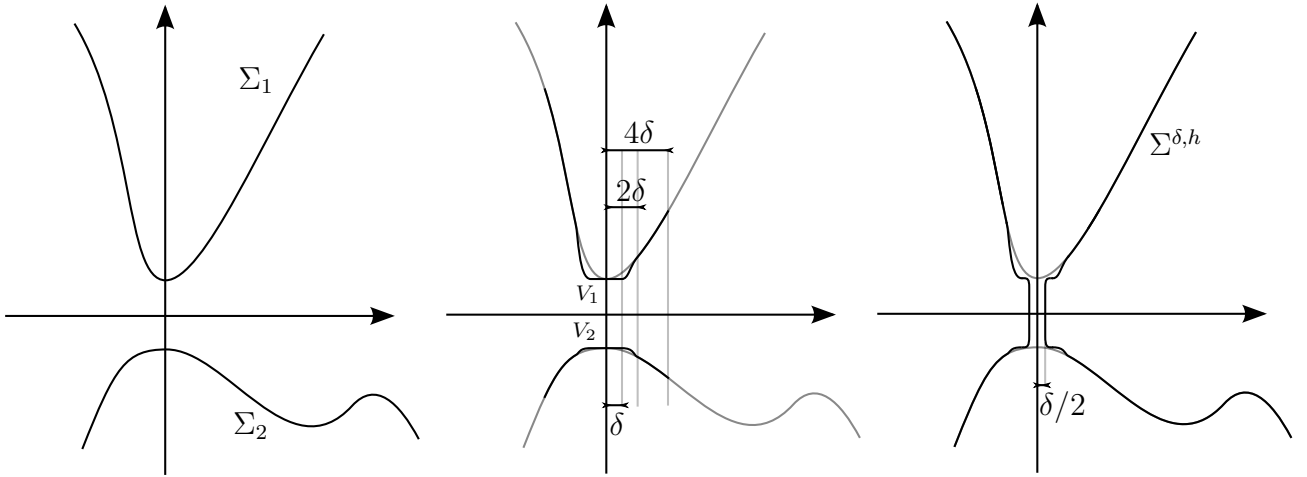}}%
    \put(0.47968479,0.27653635){\color[rgb]{0,0,0}\makebox(0,0)[lt]{\begin{minipage}{0.0497882\unitlength}\raggedright $4\delta$\end{minipage}}}%
    \put(0.47349404,0.23140619){\color[rgb]{0,0,0}\makebox(0,0)[lt]{\begin{minipage}{0.0497882\unitlength}\raggedright $2\delta$\end{minipage}}}%
    \put(0.46867643,0.04866107){\color[rgb]{0,0,0}\makebox(0,0)[lt]{\begin{minipage}{0.05530429\unitlength}\raggedright $\delta$\end{minipage}}}%
    \put(0.180858,0.31918033){\color[rgb]{0,0,0}\makebox(0,0)[lt]{\begin{minipage}{0.09527779\unitlength}\raggedright $\Sigma_1$\end{minipage}}}%
    \put(0.17688123,0.04943204){\color[rgb]{0,0,0}\makebox(0,0)[lt]{\begin{minipage}{0.10424865\unitlength}\raggedright $\Sigma_2$\end{minipage}}}%
    \put(0.88700394,0.27011444){\color[rgb]{0,0,0}\makebox(0,0)[lt]{\begin{minipage}{0.11990323\unitlength}\raggedright $\Sigma^{\delta,h}$\end{minipage}}}%
    \put(0.44238679,0.14823568){\color[rgb]{0,0,0}\makebox(0,0)[lt]{\begin{minipage}{0.08567058\unitlength}\raggedright $_{V_1}$\end{minipage}}}%
    \put(0.43986222,0.11866615){\color[rgb]{0,0,0}\makebox(0,0)[lt]{\begin{minipage}{0.06908892\unitlength}\raggedright $_{V_2}$\end{minipage}}}%
    \put(0.80182326,0.06579355){\color[rgb]{0,0,0}\makebox(0,0)[lt]{\begin{minipage}{0.05530429\unitlength}\raggedright $\delta/2$\end{minipage}}}%
  \end{picture}%
\endgroup%

\captionof{figure}{Gluing the hypersurfaces}
\label{etapes}
\end{center}

\bigskip

\addcontentsline{toc}{chapter}{Bibliographie}
%\listoffigures
\nocite{*} %Permet d'afficher toute la bibliographie sans forcément avoir fait référence dans le document
\bibliographystyle{plain}
\bibliography{biblio_article}
% NE PAS TOUCHER PLUS BAS
\label{lastpage}
\end{document}